\documentclass[11pt,a4paper]{amsart}

\usepackage{lmodern}
\usepackage{microtype}
\usepackage[english]{babel}
\usepackage{hyperref}
\usepackage[msc-links]{amsrefs}

\theoremstyle{plain} 
\newtheorem{theorem}{Theorem}
\newtheorem{lemma}[theorem]{Lemma}

\DeclareMathOperator{\mre}{Re}

\begin{document} 
\title{Harald Bohr's splitting theorem}
\date{\today}

\author{Viktor Andersson} 
\address{Department of Mathematical Sciences, Norwegian University of Science and Technology (NTNU), 7491 Trondheim, Norway} 
\email{viktor.andersson@ntnu.no}

\author{Ole Fredrik Brevig} 
\address{Department of Mathematical Sciences, Norwegian University of Science and Technology (NTNU), 7491 Trondheim, Norway} 
\email{ole.brevig@ntnu.no}

\author{Athanasios Kouroupis} 
\address{Department of Mathematics, KU Leuven, Celestijnenlaan 200B, 3001, Leuven, Belgium} 
\email{athanasios.kouroupis@kuleuven.be}

\begin{abstract}
    We present a new elementary proof of a theorem due to Harald Bohr,
    which states that an unbounded, analytic, and almost periodic function in a half-plane can be written as the sum of two analytic functions: the first is unbounded and periodic, while the second is bounded and almost periodic. The proof is based on a well-known arithmetical property of translation numbers of almost periodic functions.
\end{abstract}

\subjclass{Primary 42A75. Secondary 30B50, 30C80.}

\thanks{Andersson and Brevig are supported by Grant 354537 of the Research Council of Norway. Kouroupis is supported by Grant 1203126N of the Research Foundation -- Flanders (FWO)}

\maketitle

\section{Introduction}
The purpose of this note is to draw attention to a result in the theory of analytic almost periodic functions that appears to have been lost in the mists of time. Almost periodic functions were introduced by Harald Bohr in a series of papers \cites{Bohr1925A,Bohr1925B,Bohr1926} that appeared in 1925 and 1926. Bohr continued to study almost periodic functions for the rest of his life, and it is one of his later contributions \cite{Bohr1943} that we are now concerned with.

This tract came to be during a dramatic time in Bohr's life. It was submitted to the Royal Danish Academy of Sciences and Letters on 7 July 1943 and appeared in print on 25 November 1943. In the mean time, Bohr fled Denmark on 29 September (see e.g.~\cite{Pais1991}*{pp.~487--488}).

Let us set the stage by going over the central definitions of almost periodic functions. We consider functions $f$ defined on half-planes
\[\mathbb{C}_\kappa = \{s=\sigma+it\,:\,\sigma>\kappa\}\]
and their vertical translations $V_\tau f(s) = f(s+i\tau)$ for real numbers $\tau$. We say that $\tau$ is a \emph{translation number of} $f$ \emph{belonging to} $\varepsilon > 0$ if the estimate $\lvert V_\tau f-f\rvert \leq \varepsilon$ holds in $\mathbb{C}_\kappa$, and we let $E_f(\varepsilon)$ denote the collection of such $\tau$. The union of $E_f(\varepsilon)$ over all $\varepsilon>0$ is called the \emph{translation module} of $f$ and is denoted $T_f$. Recall that a set of real numbers is called \emph{relatively dense} if there is a real number $L>0$ such that the set has non-empty intersection with any interval of length $L$. A continuous function $f$ on $\mathbb{C}_\kappa$ is called \emph{almost periodic} if $E_f(\varepsilon)$ is relatively dense for every $\varepsilon>0$.

We are interested in analytic almost periodic functions that are unbounded as $\mre{s} \to \infty$. It may be instructive to consider a simple example. If $\lambda$ is a real number, then the analytic function $f_\lambda(s) = \exp(\lambda s)$ is \emph{periodic} and, consequently, almost periodic in any half-plane $\mathbb{C}_\kappa$. Note that if $\lambda>0$, then $f_\lambda$ is unbounded as $\mre{s}\to\infty$, so that
\[T_{f_\lambda} = \left\{2\pi k/\lambda\,:\,k\in\mathbb{Z}\right\}.\]
If $\lambda_1 \geq \lambda_2 > 0$ and $f = f_{\lambda_1} + f_{\lambda_2}$, then either $\lambda_1/\lambda_2$ is rational (and $f$ is periodic) or $T_f=\{0\}$.

In our statement of Bohr's theorem, we have chosen to fix a reference half-plane $\mathbb{C}_0$ and work in every smaller half-plane. Bohr's original (and equivalent) formulation is slightly different.

\begin{theorem}[Bohr's splitting theorem] \label{thm:bohrsplit} The following are equivalent:
    \begin{enumerate}
        \item[\normalfont (i)] $f$ is unbounded, analytic, and almost periodic in $\mathbb{C}_\kappa$ for every $\kappa>0$. 
        \item[\normalfont (ii)] $f=p+b$, where 
        \begin{itemize}
            \item $p$ is unbounded, analytic, and periodic in $\mathbb{C}_\kappa$ for every $\kappa>0$,
            \item $b$ is bounded, analytic, and almost periodic in $\mathbb{C}_\kappa$ for every $\kappa>0$.
        \end{itemize}
    \end{enumerate}
\end{theorem}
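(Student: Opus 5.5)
We prove (ii)$\Rightarrow$(i) first, since it is routine. If $p$ is periodic with period $\omega$, then $V_{k\omega}p=p$ for every $k\in\mathbb{Z}$. Hence any $\tau\in E_b(\varepsilon)$ that is close to a multiple of $\omega$ lies in $E_f(2\varepsilon)$: one estimates $V_\tau p - p$ through the uniform continuity of $p$ in $\mathbb{C}_\kappa$, which follows from boundedness of $p'$ on $\mathbb{C}_{\kappa'}$ for $\kappa'<\kappa$ modulo the unbounded growth. A cleaner route is to use the standard fact that the sum of two almost periodic functions is almost periodic, by common translation numbers. Unboundedness of $f=p+b$ is immediate, since $b$ is bounded and $p$ is not.

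The substance is (i)$\Rightarrow$(ii). The key step is to show that $T_f$ contains a nonzero element $\omega$ with $V_\omega f - f$ bounded in every $\mathbb{C}_\kappa$. The arithmetical property we will use is the following. If $\tau_1,\tau_2\in E_f(\varepsilon)$, then $\tau_1\pm\tau_2\in E_f(2\varepsilon)$. Moreover, since $f$ is unbounded as $\mre s\to\infty$, translation numbers belonging to a small $\varepsilon$ cannot be arbitrary. For analytic $f$, the function $g_\tau=V_\tau f-f$ is bounded by $\varepsilon$. Writing the Bohr–Dirichlet expansion $f\sim\sum a_\lambda e^{\lambda s}$, the bound forces $|a_\lambda||e^{i\lambda\tau}-1|e^{\lambda\sigma}\le\varepsilon$ for all $\sigma>\kappa$. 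Hence $e^{i\lambda\tau}=1$ for every $\lambda>0$ with $a_\lambda\neq0$, which is the growth obstruction. The coefficients themselves are obtained as mean values, so this step can be made elementary.

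So the set $\Lambda_+=\{\lambda>0: a_\lambda\neq 0\}$ is nonempty, because $f$ is unbounded. Otherwise $f$ would be bounded: $\lambda\le0$ frequencies give a function bounded in $\mathbb{C}_\kappa$ by Bohr's boundedness principle, which is the main obstacle here. Every $\tau\in T_f$ then satisfies $\lambda\tau\in2\pi\mathbb{Z}$ for all $\lambda\in\Lambda_+$. Since $E_f(\varepsilon)$ is relatively dense, $T_f$ is nontrivial. Pick $\omega\ne0$ in $T_f$. Then all $\lambda\in\Lambda_+$ lie in $(2\pi/\omega)\mathbb{Z}$, so $\Lambda_+$ is a subset of a discrete lattice.

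Define
\[p(s)=\sum_{\lambda\in\Lambda_+}a_\lambda e^{\lambda s},\qquad b=f-p.\]
To avoid convergence issues with Fourier series, we realize $p$ analytically instead. Set $q=V_\omega f-f$, which is bounded, with $|q|\le\varepsilon$. Then consider the Fejér/Bochner–Fejér means of $f$ restricted to the lattice frequencies. Alternatively, take $p$ to be the positive-frequency part of the $\omega$-periodic function $P=\lim_N \frac1N\sum_{k=0}^{N-1}V_{-k\omega'}f$ along suitable translation numbers.

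The hardest point, and where we expect the work to lie, is proving that $b$ is bounded in each $\mathbb{C}_\kappa$. We would try the following. Since $b$ has only frequencies $\le0$ together with the nonpositive part, and $b$ is almost periodic in $\mathbb{C}_\kappa$, bound $b$ on $\mathbb{C}_{\kappa}$ by its supremum on the line $\mre s=\kappa'$ for any $\kappa'<\kappa$. This uses a Phragmén–Lindelöf argument for almost periodic functions with nonpositive exponents, where $b$ is bounded on vertical lines by almost periodicity. Then $p$ is $\omega$-periodic and unbounded, and $b$ is bounded and almost periodic as a difference of almost periodic functions on a common set of translations (by periodicity of $p$).
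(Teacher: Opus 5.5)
There are genuine gaps in both directions.

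\textbf{Direction (ii)$\Rightarrow$(i).} Your argument does not work as written. Because $p$ is unbounded in $\mathbb{C}_\kappa$, it is not uniformly continuous there, and $p'$ is unbounded too (think of $p(s)=e^{s}$). In fact $V_\tau p-p$ is unbounded unless $\tau$ is an \emph{exact} period of the unbounded part of $p$. So knowing that $\tau\in E_b(\varepsilon)$ is merely close to a multiple of $\omega$ gives you nothing about $p$. Your fallback, that a sum of two almost periodic functions is almost periodic, is false here: $e^{s}$ and $e^{\sqrt2 s}$ are both periodic, yet their sum has translation module $\{0\}$. What you actually need is that the exact multiples $k\omega$ lying in $E_b(\varepsilon)$ form a relatively dense set, since $E_f(\varepsilon)\supseteq \omega\mathbb{Z}\cap E_b(\varepsilon)$. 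This is the arithmetical Lemma~\ref{lem:aritprog}. Its proof uses the uniform continuity of $b$ (not of $p$), which follows from Cauchy's estimate because $b$ is bounded on a larger half-plane. It then applies a pigeonhole argument to the differences $k\omega-\tau_k\in[-L/2,L/2]$ with $\tau_k\in E_b(\varepsilon/4)$. Your proposal contains neither ingredient.

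\textbf{Direction (i)$\Rightarrow$(ii).} You are following Bohr's original Dirichlet-series route, but both substantive steps are left open.
First, the existence of the $\omega$-periodic mean $P$ is asserted, not proved; this is Theorem~\ref{thm:percomp}, which needs Lemma~\ref{lem:aritprogstrip} or Bochner--Fej\'er approximation.
Second, ``Bohr's boundedness principle'' is precisely the deep Fourier-analytic input, and you need it both for $\Lambda_+\neq\emptyset$ and for the boundedness of $b$.
Your proposed Phragm\'en--Lindel\"of argument applied directly to $b$ is circular. It requires a priori growth control of $b$ as $\mre s\to\infty$, and almost periodicity supplies none: $\exp(e^{s})$ is periodic yet grows doubly exponentially.

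The missing idea is to apply the maximum principle not to $b$ but to the differences $V_{j\tau}f-f$. Take $\tau>0$ in $T_f$ and set $p=P_\tau f$. Since $j\tau\in T_f$, each $V_{j\tau}f-f$ is a priori bounded and analytic in $\mathbb{C}_\kappa$. Hence it is bounded by its supremum on $\mre s=\kappa$, which is at most $2M_\infty(\kappa,f)$ uniformly in $j$; this is finite by Lemma~\ref{lem:strip}. Averaging gives $|b(s)|\le\liminf_{k}\frac1k\sum_{j<k}|f(s)-V_{j\tau}f(s)|\le 2M_\infty(\kappa,f)$. Almost periodicity of $b$ follows from $E_f(\varepsilon/2)\subseteq E_p(\varepsilon/2)$. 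This route needs no Dirichlet series at all.
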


The main takeaway of Bohr's splitting theorem is that the unbounded part of $f$ is contained in the periodic function $p$. The main benefit of this is that $p$ is rather easy to analyze as it may be expressed in terms of the Laurent series of an analytic function in the punctured unit disc, namely as
\begin{equation} \label{eq:laurent}
    p(s) = \sum_{n \in \mathbb{Z}} a_n \exp(-\lambda n s)
\end{equation}
for some $\lambda>0$. Note that \eqref{eq:laurent} converges absolutely at every point in $\mathbb{C}_0$. Since $p$ is unbounded in $\mathbb{C}_\kappa$ for every $\kappa>0$, there must be some $n<0$ such that $a_n \neq 0$. We refer to the part of the Laurent series \eqref{eq:laurent} comprised of the terms with $n<0$ as the \emph{unbounded part} of $p$. The next result addresses in what sense the splitting $f=p+b$ is unique. We say that the \emph{period} of a non-constant periodic function $p$ is the smallest $\tau>0$ such that $V_\tau p = p$.

\begin{theorem}[Uniqueness in Bohr's splitting theorem] \label{thm:bohrunique}
    Let $f$ be unbounded, analytic, and almost periodic in $\mathbb{C}_\kappa$ for every $\kappa>0$.
    \begin{enumerate}
        \item[\normalfont (a)] If $f=p+b$ and if $t>0$ is the period of the unbounded part of $p$, then 
\[T_f = \{tk\,:\,k \in \mathbb{Z}\}.\]
        \item[\normalfont (b)] If $f = p_1 + b_1$ and $f = p_2 + b_2$, then the unbounded parts of $p_1$ and $p_2$ coincide. 
    \end{enumerate}
\end{theorem}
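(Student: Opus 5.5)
We prove (a) first; (b) then follows quickly. Write $p = u + h$, where $u(s)=\sum_{n<0} a_n \exp(-\lambda n s)$ is the unbounded part and $h(s)=\sum_{n\geq 0} a_n \exp(-\lambda n s)$. Then $h$ is bounded in $\mathbb{C}_\kappa$ for every $\kappa>0$, since there $\lvert \exp(-\lambda n s)\rvert\le \exp(-\lambda n\kappa)$ for $n\ge0$ and the series converges absolutely at the real point $\kappa$. Consequently $f = u + (h+b)$, where $u$ is $t$-periodic and $g=h+b$ is bounded and almost periodic in each $\mathbb{C}_\kappa$.

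\emph{The inclusion $\{tk\}\subseteq T_f$.} Fix $\kappa>0$ and $\varepsilon>0$. Because $g$ is almost periodic, its set of $\varepsilon$-translation numbers is relatively dense with some length $L$. We want to show that $E_g(\varepsilon)$ contains points arbitrarily close to every multiple $tk$, and then pass to $tk$ itself.

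A cleaner route is to work with the mean of $g$ along vertical lines. Since $g$ is almost periodic, the limit
\[
M(\sigma)=\lim_{T\to\infty}\frac{1}{2T}\int_{-T}^{T}\bigl(g(\sigma+it+itk)-g(\sigma+it)\bigr)\,dt
\]
exists and equals $0$. That is not enough for a sup bound, however, so we use a different argument. The key lemma is the arithmetical property of translation numbers cited in the abstract: for an almost periodic $F$, if $\tau_1\in E_F(\varepsilon_1)$ and $\tau_2\in E_F(\varepsilon_2)$, then $\tau_1\pm\tau_2\in E_F(\varepsilon_1+\varepsilon_2)$. In particular, $T_F$ is an additive group.

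So it suffices to show the two facts $T_f\supseteq t\mathbb{Z}$ and $T_f\subseteq t\mathbb{Z}$.

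\emph{The inclusion $T_f\subseteq t\mathbb{Z}$.} Suppose $\tau\in T_f$, so that $\lvert V_\tau f-f\rvert\le\varepsilon$ on $\mathbb{C}_\kappa$. Then
\[
V_\tau u-u=(V_\tau f-f)-(V_\tau g-g)
\]
is bounded on $\mathbb{C}_\kappa$. On the other hand,
\[
V_\tau u-u=\sum_{n<0}a_n\bigl(e^{-i\lambda n\tau}-1\bigr)e^{-\lambda n s}.
\]
Write this as $\phi(e^{-\lambda s})$, where $\phi(z)=\sum_{n<0}c_n z^n$ is a function of $z=e^{-\lambda s}$ in the punctured disc. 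Boundedness near $z=0$ forces the singularity at $0$ to be removable. Since $\phi$ consists only of negative powers, this gives $c_n=0$ for all $n$, which means $e^{-i\lambda n\tau}=1$ whenever $a_n\ne0$. Hence $\tau$ is a period of $u$, and so $\tau\in t\mathbb{Z}$, because the periods of a non-constant continuous periodic function form the group $t\mathbb{Z}$.

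\emph{The inclusion $t\mathbb{Z}\subseteq T_f$.} This is the main obstacle. We have $V_t f-f=V_t g-g$, which is bounded but need not be small a priori. The plan is to use the relative density of $E_f(\varepsilon)$. By the previous step, every $\tau\in E_f(\varepsilon)$ lies in $t\mathbb{Z}$, so $E_f(\varepsilon)\subseteq t\mathbb{Z}$ for every $\varepsilon$.

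Next, apply the same step to $g$ restricted to $\mathbb{C}_{\kappa}$ and to $f$. For each $\varepsilon$, pick $\tau_\varepsilon=tk_\varepsilon\in E_f(\varepsilon)$. Then $\lvert V_{\tau_\varepsilon}g-g\rvert=\lvert V_{\tau_\varepsilon}f-f\rvert\le\varepsilon$. So the $\varepsilon$-translation numbers of $g$ lying in $t\mathbb{Z}$ are themselves relatively dense in $t\mathbb{Z}$.

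To upgrade this to every multiple of $t$, consider the almost periodic function of the integer variable $k\mapsto V_{tk}g$, with values in the bounded functions on $\mathbb{C}_\kappa$ under the sup norm. Its orbit closure is a compact abelian group $K$, obtained via Bochner's criterion using the uniform continuity on $\mathbb{C}_{\kappa'}$, $\kappa'<\kappa$, that follows from Cauchy estimates. Inside $K$, the elements $V_{tk}$ with $k\in E:=\{k: tk\in E_f(\varepsilon)\ \text{for all small}\ \varepsilon\}$ accumulate at the identity.

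The difficulty is that $t$ itself may fail to be a small translation of $g$ in general. However, $f$ is only required to have $T_f$ equal to the union over all $\varepsilon$. Since $\lvert V_t f-f\rvert=\lvert V_t g-g\rvert\le 2\sup\lvert g\rvert<\infty$, we get $t\in E_f(2\sup\lvert g\rvert)\subseteq T_f$. Hence $t\mathbb{Z}\subseteq T_f$ directly, and the obstacle disappears because $T_f$ is defined with no smallness requirement.

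\emph{Part (b).} If $f=p_1+b_1=p_2+b_2$, then by (a) the unbounded parts $u_1,u_2$ share the period $t$. Thus $u_1-u_2$ is $t$-periodic, and it equals $(h_2+b_2)-(h_1+b_1)$, which is bounded. Expand $u_1-u_2$ in the Laurent series in $e^{-2\pi s/t}$; it contains only negative powers (after passing to the common frequency $2\pi/t$). The removable-singularity argument then shows $u_1-u_2\equiv0$.
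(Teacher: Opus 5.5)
Your final argument is correct and is essentially the paper's proof. Splitting $f=u+g$ with $g$ bounded gives $t\mathbb{Z}\subseteq T_f$ at once, since $T_f$ carries no smallness requirement. The inclusion $T_f\subseteq t\mathbb{Z}$ follows because a bounded $V_\tau u-u$ must vanish by your removable-singularity argument, which is exactly the paper's ``inspecting the Laurent series'', and your part (b) matches the paper's. You should delete the detours through vertical means, Bochner's criterion and the compact orbit closure, since none of them is used; also, the additivity of translation numbers you invoke is just the triangle inequality, not the arithmetical property of Lemma~\ref{lem:aritprog}.
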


Note that the equality in part (a) is independent of which half-plane $\mathbb{C}_\kappa$ with $\kappa>0$ that we take the translation module with respect to. Theorem~\ref{thm:bohrunique} will be established by a fairly simple analysis of the Laurent series \eqref{eq:laurent}.

The main contribution of the present note is a new proof of Bohr's splitting theorem that does not rely on the Fourier-analytic machinery developed in the initial papers \cites{Bohr1925A,Bohr1925B,Bohr1926}. We were inspired by a paper of Bochner~\cite{Bochner1963}, which contains a simple proof of one of the fundamental results from \cite{Bohr1926}. Our proof is short and elementary: it relies only on a few basic facts from complex analysis. It moreover identifies an
arithmetical property of translation numbers as the heart of the matter.

\begin{lemma} \label{lem:aritprog}
    If $f$ is almost periodic and uniformly continuous in $\mathbb{C}_\kappa$, then
    \[\{tk\,:\,k\in\mathbb Z\} \cap E_f(\varepsilon)\]
    is relatively dense for all $t>0$ and every $\varepsilon>0$.
\end{lemma}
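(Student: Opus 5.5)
Fix $t>0$ and $\varepsilon>0$. The idea is to show that among the translation numbers belonging to a smaller tolerance, there are relatively many that lie close to an integer multiple of $t$, and then to use uniform continuity to absorb the small error. By uniform continuity of $f$ in $\mathbb{C}_\kappa$, choose $\delta>0$ such that $\lvert f(s+ih)-f(s)\rvert\le\varepsilon/2$ whenever $\lvert h\rvert\le\delta$. Then it suffices to show that the set of $\tau\in E_f(\varepsilon/(2N))$ (for a suitable $N$) within distance $\delta$ of $t\mathbb{Z}$ is relatively dense. Indeed, if $\tau$ is such a number and $tk$ is the nearest multiple, then $\lvert V_{tk}f-f\rvert\le \lvert V_{tk}f-V_\tau f\rvert+\lvert V_\tau f-f\rvert\le \varepsilon/2+\varepsilon/2$. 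The first term is bounded because $V_{tk}f-V_\tau f$ is a vertical shift of $f$ by at most $\delta$, evaluated at a translated point, and $\mathbb{C}_\kappa$ is invariant under vertical translation. Moreover $tk$ stays within $\delta$ of $\tau$, so relative density transfers with $L$ replaced by $L+2\delta$.

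To produce translation numbers near $t\mathbb{Z}$, I would use a pigeonhole (Dirichlet-type) argument on the circle $\mathbb{R}/t\mathbb{Z}$. Partition the circle into $N$ arcs of length less than $\delta$, where $N=\lceil t/\delta\rceil+1$. Let $L$ be an inclusion length for $E_f(\varepsilon/(2N))$, and consider any interval $I=[a,a+M]$ with $M$ large. Pick translation numbers $\tau_0,\tau_1,\dots,\tau_N$ of $f$ belonging to $\varepsilon/(2N)$, with $\tau_j\in[jL',(j+1)L')$ inside a block of consecutive intervals; this is possible by relative density. Two of their residues mod $t$ fall in the same arc, so $\tau_j-\tau_i$ is within $\delta$ of $t\mathbb{Z}$. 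Also $\tau_j-\tau_i\in E_f(\varepsilon/N)$ by the triangle inequality, since $E_f(\alpha)-E_f(\beta)\subset E_f(\alpha+\beta)$ using translation invariance of the half-plane.

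The main obstacle is location: differences $\tau_j-\tau_i$ lie in $[0,(N+1)L]$, not in an arbitrary prescribed interval. To fix this, I would first build a single good element $\sigma$ near $t\mathbb{Z}$ with small error, and then move it to a target interval. A cleaner route is to apply the pigeonhole to the sums $\tau+\rho_j$: given a target $a$, choose $\tau\in E_f(\eta)\cap[a,a+L]$, then among the $N+1$ numbers $\tau+\rho_0,\dots,\tau+\rho_N$, which are too spread out to compare directly, pick instead $\rho_0,\dots,\rho_N\in E_f(\eta)\cap[0,(N+1)L]$. Their residues together with the residue of $-\tau$ give $N+2$ points, so some residue of $\rho_j$ is within $\delta$ of that of $-\tau$, or two of the $\rho$'s coincide modulo small error. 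This second case needs an iteration, which is messy.

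Simplest fix: pigeonhole over the $N$ arcs with the $N+1$ sets. Take $\tau\in E_f(\eta)\cap[a,a+L]$ and $\rho_j$ as above with $j=0,\dots,N$. The residues of $\tau-\rho_j$ for $j=0,\dots,N$ cannot be forced to land near $0$. So instead use the standard trick: the set $D=\{\rho_j-\rho_i\}$ already contains an element $d\in E_f(2\eta)\cap[-(N+1)L,(N+1)L]$ near $t\mathbb{Z}$. Then consider the finite family of all "good" differences; the multiples $m d$ drift away from $t\mathbb{Z}$. I would therefore apply the Dirichlet argument directly to the target by choosing $\tau_j\in E_f(\eta)\cap[a+jL,a+(j+1)L]$ for $j=0,\dots,N$, and also $\rho_j\in E_f(\eta)\cap[jL,(j+1)L]$. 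Pigeonholing the residues of $\tau_j$ and $-\rho_i$ does not directly work either. The robust approach is the pigeonhole on $\tau_j-\tau_0$ translated: $\tau_j-\tau_i\in[a',a'+(N+1)L]$ only when $a$ is near $0$.

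Hence the final argument I would commit to is Bochner's: for the Bohr compactification, $\{\tau: \tau \bmod t \in(-\delta,\delta)\}$ is itself a Bohr neighbourhood, namely an $E_g(\cdot)$ for $g(s)=e^{2\pi s/t}$ on a bounded strip. I would show that $f$ and $g$ are jointly almost periodic, with common translation numbers relatively dense, by the standard pigeonhole over $f$'s values on a compact set; this joint almost periodicity is the main step. Common translation numbers of $(f,g)$ are exactly the desired $\tau$, which finishes the proof.
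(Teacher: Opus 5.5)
Your opening reduction is correct: if $\tau\in E_f(\varepsilon/2)$ lies within $\delta$ of some $tk$, then $tk\in E_f(\varepsilon)$, and relative density passes from such $\tau$ to the multiples $tk$. Beyond that reduction, however, the proposal proves nothing. You correctly isolate the obstacle --- pigeonholing the residues of $\tau_0,\dots,\tau_N$ produces a good difference in $[0,(N+1)L]$, not near an arbitrary target $a$ --- but none of your attempted fixes resolves it. The argument you finally commit to only renames the problem. By your own reduction, relative density of common translation numbers of $f$ and the $t$-periodic function $g$ is equivalent to the lemma itself. So calling this ``the main step'' and attributing it to ``the standard pigeonhole over $f$'s values on a compact set'' leaves the whole content unproved. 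That phrase also does not describe a working argument: the required estimate is a supremum over the non-compact half-plane $\mathbb{C}_\kappa$, and the finiteness you can exploit lies in the translation parameter, not in the domain of $f$. Invoking Bohr neighbourhoods in the Bohr compactification would presuppose far heavier theory than the lemma.

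The missing idea is to fix representatives for the finitely many \emph{occupied} bins, instead of pre-selecting translation numbers. The paper does it this way:
\begin{itemize}
\item for every $k\in\mathbb{Z}$, pick $\tau_k\in E_f(\varepsilon/4)$ with $\lvert tk-\tau_k\rvert\le L/2$;
\item sort the errors $tk-\tau_k$ into $d=L/\delta$ bins of length $\delta$;
\item in each of the at most $d$ classes, take the index $k'$ of smallest modulus.
\end{itemize}
Then $t(k-k')\in E_f(\varepsilon)$ for every $k$, and since $\lvert k'\rvert\le K$, these multiples meet every interval of length $t(2K+1)$.

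Your ``cleaner route'' was one step from this. Rather than fixed $\rho_j\in[jL,(j+1)L]$, take, for each arc of $\mathbb{R}/t\mathbb{Z}$ that contains the residue of some element of $E_f(\varepsilon/4)$, one such element as that arc's representative. There are at most $N$ representatives, all of modulus at most some $R$. Subtracting from any $\tau\in E_f(\varepsilon/4)\cap[a,a+L]$ the representative of its arc gives an element of $E_f(\varepsilon/2)$ within $\delta$ of $t\mathbb{Z}$ and within $R$ of $\tau$; no iteration is needed.

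Alternatively, your Bochner route can be completed properly:
\begin{itemize}
\item $\{V_hf\}_{h\in\mathbb{R}}$ is totally bounded in the sup metric on $\mathbb{C}_\kappa$: write $h=\tau+r$ with $\tau\in E_f(\eta)$ and $r\in[0,L]$, and use a finite $\delta$-net of $[0,L]$;
\item hence $\{(V_hf,V_hg)\}_{h\in\mathbb{R}}$ is totally bounded as well;
\item a finite $\eta$-net $(V_{h_i}f,V_{h_i}g)$, $1\le i\le n$, taken from this family, gives for each $a$ a common translation number $a-h_i$ within $\max_i\lvert h_i\rvert$ of $a$.
\end{itemize}
One of these arguments has to be written out for the proposal to be a proof.
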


Lemma~\ref{lem:aritprog} is well-known and (a version of it) can be found in \S I.11 of Besicovitch \cite{Besicovitch1955}, as well as in \S 9 of \cite{Bohr1925A}. In order to make the present note completely self-contained, we include the proof below. The key point is that the uniform continuity of $f$ ensures that for every $\varepsilon>0$ there is a $\delta>0$ such that the interval $[-\delta,\delta]$ is contained in $E_f(\varepsilon)$.

The implication (ii) $\implies$ (i) of Theorem~\ref{thm:bohrsplit} is a fairly direct consequence of Lemma~\ref{lem:aritprog} when applied to $b$. We cannot use Lemma~\ref{lem:aritprog} for the reverse implication, since $f$ is not uniformly continuous in $\mathbb{C}_\kappa$. However, $f$ is bounded and uniformly continuous in any vertical strip
\[\mathbb{S}_{\alpha,\beta} = \{s = \sigma+it\,:\, \alpha<\sigma<\beta\}\]
with $\kappa<\alpha<\beta$ due to almost periodicity (see Lemma~\ref{lem:strip} below). This yields the following local version of Lemma~\ref{lem:aritprog}, where we write $E_f(\varepsilon,\mathbb{S}_{\alpha,\beta})$ for the translation numbers of $f$ belonging to $\varepsilon$ with respect to $\mathbb{S}_{\alpha,\beta}$.

\begin{lemma} \label{lem:aritprogstrip}
    If $f$ is almost periodic in $\mathbb{C}_\kappa$ and if $\kappa<\alpha<\beta$, then
    \[\{tk\,:\,k\in\mathbb Z\} \cap E_f(\varepsilon,\mathbb{S}_{\alpha,\beta})\]
    is relatively dense for all $t>0$ and every $\varepsilon>0$.
\end{lemma}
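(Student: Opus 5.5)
The plan is to reduce Lemma~\ref{lem:aritprogstrip} to Lemma~\ref{lem:aritprog}, or rather to rerun the proof of Lemma~\ref{lem:aritprog} inside the strip. First I will invoke Lemma~\ref{lem:strip}: since $\kappa<\alpha<\beta$, the function $f$ is bounded and uniformly continuous in $\mathbb{S}_{\alpha,\beta}$. I will also use that $f$ restricted to the strip is almost periodic there, because $E_f(\varepsilon)\subseteq E_f(\varepsilon,\mathbb{S}_{\alpha,\beta})$. Uniform continuity gives, for every $\varepsilon>0$, a $\delta>0$ with $[-\delta,\delta]\subseteq E_f(\varepsilon/2,\mathbb{S}_{\alpha,\beta})$, since $|f(s+i\tau)-f(s)|\le\varepsilon/2$ whenever $|\tau|\le\delta$.

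Next comes the pigeonhole argument on the torus $\mathbb{R}/t\mathbb{Z}$. Fix $t>0$ and $\varepsilon>0$. Let $L$ be a length for $E_f(\varepsilon/2,\mathbb{S}_{\alpha,\beta})$, and cover $[0,t)$ by $N$ intervals $I_1,\dots,I_N$ of length $\delta$. I will use the triangle inequality, $E(\varepsilon_1)-E(\varepsilon_2)\subseteq E(\varepsilon_1+\varepsilon_2)$ and $-E(\varepsilon)=E(\varepsilon)$, both in the strip. This works because translation preserves the strip: if $\tau_1,\tau_2$ are translation numbers, then $|f(s+i\tau_1-i\tau_2)-f(s)|\le|f(s+i\tau_1-i\tau_2)-f(s-i\tau_2)|+|f(s-i\tau_2)-f(s)|$, and $s-i\tau_2\in\mathbb{S}_{\alpha,\beta}$.

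Consider any interval $J=[x,x+M]$ with $M$ large. The goal is a multiple of $t$ in $E_f(\varepsilon,\mathbb{S}_{\alpha,\beta})\cap J$. I will choose finitely many representatives instead of working with all translation numbers. Fix once and for all $\tau_j\in E_f(\varepsilon/4,\mathbb{S}_{\alpha,\beta})$ with $\tau_j \bmod t\in I_j$ for each $j$ for which such a $\tau_j$ exists, and set $K=\max_j|\tau_j|$.

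Take any $\tau\in E_f(\varepsilon/4,\mathbb{S}_{\alpha,\beta})\cap[x+K+\delta,\,x+K+\delta+L]$. Its residue lies in some $I_j$ for which $\tau_j$ exists. Then $\tau-\tau_j=tk+r$ with $|r|\le\delta$ (modulo $t$, choosing the representative), and $\tau-\tau_j\in E_f(\varepsilon/2,\mathbb{S}_{\alpha,\beta})$. Hence $tk=(\tau-\tau_j)-r\in E_f(\varepsilon,\mathbb{S}_{\alpha,\beta})$, using $[-\delta,\delta]\subseteq E_f(\varepsilon/2,\mathbb{S}_{\alpha,\beta})$. Also $tk\in[x,x+2K+2\delta+L]$. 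So relative density holds with length $M=2K+2\delta+L$, after adjusting $\delta$ to the $\varepsilon/4$ level. The representatives $\tau_j$ are what make the bound uniform in $J$, and this is the main point to get right. The only other subtlety is checking that Lemma~\ref{lem:strip} supplies uniform continuity on the full strip. If it does so only for closed substrips, I would first shrink to $\alpha<\alpha'<\beta'<\beta$, which is harmless. Alternatively, the claim is literally Lemma~\ref{lem:aritprog} with $\mathbb{C}_\kappa$ replaced by $\mathbb{S}_{\alpha,\beta}$, since that proof used only these ingredients.
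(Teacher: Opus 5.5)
Your proof is correct and is essentially the paper's. The paper proves Lemma~\ref{lem:aritprogstrip} in one line: the uniform continuity in $\mathbb{S}_{\alpha,\beta}$ from Lemma~\ref{lem:strip} lets the pigeonhole proof of Lemma~\ref{lem:aritprog} run verbatim in the strip, which is exactly your plan. Your binning of translation numbers by residue modulo $t$, with one fixed representative per bin, is only a cosmetic variant of the paper's binning of the offsets $tk-\tau_k$ with minimal-modulus representatives.

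Two small bookkeeping points: $L$ must be a length for $E_f(\varepsilon/4,\mathbb{S}_{\alpha,\beta})$, not $E_f(\varepsilon/2,\mathbb{S}_{\alpha,\beta})$; your $\delta$ at the $\varepsilon/2$ level is already the right choice. Also, your closed-substrip fallback is unnecessary, since Lemma~\ref{lem:strip} covers the open strip directly; were it needed, you would enlarge to $\kappa<\alpha'<\alpha<\beta<\beta'$ rather than shrink.
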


We will utilize Lemma~\ref{lem:aritprogstrip} and the fact that $f$ is bounded in $\mathbb{S}_{\alpha,\beta}$ to prove the existence of the \emph{periodic components} $P_\tau f$ of an almost periodic function.

\begin{theorem} \label{thm:percomp}
    If $f$ is an almost periodic function in $\mathbb{C}_\kappa$ and $\tau>0$, then
    \begin{equation} \label{eq:percomp}
        P_\tau f(s) = \lim_{k\to\infty} \frac{1}{k} \sum_{j=0}^{k-1} V_{j\tau} f(s)
    \end{equation}
    converges uniformly in $\mathbb{S}_{\alpha,\beta}$ for every $\kappa<\alpha<\beta$. It holds that 
    \[V_\tau P_\tau f(s) = P_\tau f(s),\] 
    so that $f$ is periodic in $\mathbb{C}_\kappa$ with period at most $\tau$.
\end{theorem}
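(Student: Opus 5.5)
We fix $\kappa<\alpha<\beta$, $\tau>0$, and write $A_k f = \frac1k\sum_{j=0}^{k-1} V_{j\tau} f$ for the Cesàro averages. We will show that $(A_k f)$ is uniformly Cauchy on $\mathbb{S}_{\alpha,\beta}$. Throughout, $M=\sup_{\mathbb{S}_{\alpha,\beta}}\lvert f\rvert<\infty$ by Lemma~\ref{lem:strip}, and every translate $V_{j\tau}f$ is bounded by $M$ on the strip, since the strip is invariant under vertical translations.

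The key input is Lemma~\ref{lem:aritprogstrip} with $t=\tau$. Fix $\varepsilon>0$. The set of integers $m$ with $m\tau\in E_f(\varepsilon,\mathbb{S}_{\alpha,\beta})$ is relatively dense in $\mathbb{Z}$, say with gap at most $N$. Every $j\ge0$ can therefore be written as $j=qN'+r$ in the following sense. Choose block boundaries $0=m_0<m_1<m_2<\cdots$ with $m_{i+1}-m_i\le 2N$, and each $m_i\tau$ an $\varepsilon$-translation number. It is simpler, however, to compare $A_k f$ with a shifted average. For such an $m$, we have $\lvert V_{m\tau}A_kf - A_kf\rvert\le\varepsilon$ on the strip, because $V_{m\tau}$ commutes with the averaging and $\lvert V_{j\tau}(V_{m\tau}f-f)\rvert\le\varepsilon$. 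On the other hand, the telescoping identity gives
\[
A_\ell(A_kf) - A_k f = \frac1\ell\sum_{i=0}^{\ell-1}\bigl(V_{i\tau}A_kf-A_kf\bigr),
\]
and $\lvert V_{i\tau}A_kf - A_kf\rvert\le 2iM/k$. So $A_\ell A_k f$ is within $\ell M/k$ of $A_kf$, and symmetrically $A_kA_\ell f$ is within $kM/\ell$ of $A_\ell f$. Since $A_kA_\ell=A_\ell A_k$, this alone does not suffice when $k,\ell$ are comparable. This is where the translation numbers enter.

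The plan is to show that for each $\varepsilon$ there is $L$ with $\lvert A_L f - A_k f\rvert\le C\varepsilon$ for all large $k$. Take $L$ large, and for each $i\in\{0,\dots,L-1\}$ pick an integer $m_i$ with $m_i\tau\in E_f(\varepsilon,\mathbb{S}_{\alpha,\beta})$ and $\lvert m_i - i\rvert\le N$. Then
\[
V_{i\tau}A_kf = V_{(i-m_i)\tau}V_{m_i\tau}A_kf,
\]
which lies within $\varepsilon + 2NM/k$ of $A_kf$. Hence $\lvert A_LA_kf - A_kf\rvert\le\varepsilon+2NM/k$ uniformly in $L$. Swapping the order, $A_kA_Lf$ lies within $LM/k$ of $A_Lf$. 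Combining these,
\[
\lvert A_kf - A_Lf\rvert\le \varepsilon + (2N+L)M/k,
\]
so for fixed $L$ and all $k\ge k_0$ this is at most $2\varepsilon$. The sequence is therefore uniformly Cauchy, and the limit $P_\tau f$ exists uniformly on the strip. The step I expect to be the main obstacle is precisely this: controlling $V_{i\tau}A_kf$ for all $i$ simultaneously, which is exactly what the relative density of translation numbers inside the arithmetic progression $\tau\mathbb{Z}$ provides. Lemma~\ref{lem:aritprogstrip} is used only here, and only because a general translation number need not be a multiple of $\tau$, so it would not commute usefully with the discrete averages.

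Periodicity is then routine: $V_\tau A_kf - A_kf = \frac1k(V_{k\tau}f - f)$, which is bounded by $2M/k\to0$ uniformly on the strip. Passing to the limit gives $V_\tau P_\tau f = P_\tau f$ on each strip, hence on all of $\mathbb{C}_\kappa$, since every point lies in some $\mathbb{S}_{\alpha,\beta}$. Continuity of $P_\tau f$ follows from uniform convergence on strips; analyticity is likewise preserved when $f$ is analytic.
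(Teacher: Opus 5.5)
Your approach is different from the paper's, but one step as written is false. The sentence ``Swapping the order, $A_kA_Lf$ lies within $LM/k$ of $A_Lf$'' is wrong. The telescoping bound controls $\lvert A_kA_Lf-A_Lf\rvert$ only by $kM/L$, as you noted earlier. The quantity $LM/k$ bounds the distance from $A_LA_kf=A_kA_Lf$ to $A_kf$, not to $A_Lf$.

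The resulting inequality $\lvert A_kf-A_Lf\rvert\le\varepsilon+(2N+L)M/k$ cannot hold. Take $L=1$, so $A_1f=f$, and let $k\to\infty$. You would get $\lvert P_\tau f-f\rvert\le\varepsilon$ on the strip for every $\varepsilon>0$, so every almost periodic $f$ would be $\tau$-periodic. A concrete counterexample is $f(s)=\exp(-s)$ with $\tau=1$. Here $A_kf=\bigl(\frac1k\sum_{j=0}^{k-1}e^{-ij}\bigr)f\to0$, while $f\neq0$. Notice also that ``take $L$ large'' is never actually used.

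The repair uses only your own estimate, applied symmetrically. The bound $\lvert V_{i\tau}A_kf-A_kf\rvert\le\varepsilon+2NM/k$ holds for every integer $i\ge0$ and every $k$. Exchanging the roles of $k$ and $L$ therefore gives $\lvert A_kA_Lf-A_Lf\rvert\le\varepsilon+2NM/L$, uniformly in $k$. Combining this with $\lvert A_LA_kf-A_kf\rvert\le\varepsilon+2NM/k$ and $A_kA_L=A_LA_k$ yields
\[
\lvert A_kf-A_Lf\rvert\le 2\varepsilon+2NM\Bigl(\frac1k+\frac1L\Bigr)\le 4\varepsilon \quad\text{for } k,L\ge 2NM/\varepsilon ,
\]
which is the uniform Cauchy property.

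With this correction your proof is valid and genuinely different from the paper's. The paper compares sums over two blocks of $D$ consecutive integers by shifting one block by $(m_2-m_1)\tau$. It then bootstraps to lengths that are multiples of $D$ and finally handles arbitrary lengths by a remainder estimate. Your commuting double average $A_kA_L=A_LA_k$ is a mean-ergodic-style trick. It reduces everything to one fact: each $A_kf$ is almost invariant under all $V_{i\tau}$, with error $\varepsilon+2NM/k$. This avoids the block and remainder bookkeeping. Both arguments use Lemma~\ref{lem:aritprogstrip} and boundedness on strips in the same essential way.

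You should also delete the stray unfinished sentences about writing $j=qN'+r$ and choosing block boundaries. Your periodicity argument is correct as written.
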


The idea in our proof of the implication (i) $\implies$ (ii) in Theorem~\ref{thm:bohrsplit} is to extract the periodic function $p$ as a periodic component $P_\tau f$ using Theorem~\ref{thm:percomp}. The question is then which $\tau>0$ to choose in order to ensure that the unbounded part of $f$ is contained in $P_\tau f$. A hint can be found in Theorem~\ref{thm:bohrunique}~(a): we can choose any $\tau>0$ from the translation module $T_f$. The justification comes from the assumption that $f$ is analytic via a suitable application of the maximum principle.

\subsection*{Organization} This note is comprised of two additional sections. Section~\ref{sec:proofs} contains the proofs of the results stated above, while Section~\ref{sec:bohr} is devoted to a short discussion of Bohr's proof of Theorem~\ref{thm:bohrsplit}.

\section{Proofs} \label{sec:proofs}
We begin with the well-known Lemma~\ref{lem:aritprog}. Our argument is a streamlined version of the proof of Lemma~10 from \S I.1 in Besicovitch~\cite{Besicovitch1955}. We will use the following basic consequence of the triangle inequality: If $\tau_1$ and $\tau_2$ are translation numbers belonging to $\varepsilon_1$ and $\varepsilon_2$, then $\tau_1 \pm \tau_2$ are translation numbers belonging to $\varepsilon_1+\varepsilon_2$.

\begin{proof}[Proof of Lemma~\ref{lem:aritprog}]
    Fix $t>0$ and $\varepsilon>0$. Since $f$ is almost periodic, there is a number $L>0$ such that every interval of length $L$ contains an element of $E_f(\varepsilon/4)$. For each integer $k$, we can therefore find $\tau_k$ in $E_f(\varepsilon/4)$ such that 
    \[\lvert t k - \tau_k\rvert \leq L/2.\] Using next the uniform continuity of $f$, we infer that there is $\delta>0$ such that $[-\delta,\delta]$ belongs to $E_f(\varepsilon/2)$ and such that $L/\delta=d$ is an integer. Divide the interval $[-L/2,L/2]$ into $d$ disjoint intervals of length $\delta$, and define the equivalence relation $j \sim k$ on $\mathbb{Z}$ by the requirement that $tj - \tau_j$ and $tk - \tau_k$ belong to the same interval of length $\delta$. If $j \sim k$, then 
    \[\lvert t(j-k) - (\tau_j-\tau_k) \rvert \leq \delta,\]
    so that $t(j-k)$ belongs to $E_f(\varepsilon)$ since $\tau_j-\tau_k$ belongs to $E_f(\varepsilon/2)$. Let $k'$ denote the integer of smallest modulus in the equivalence class $[k]$ and define
    \[A = \{t (k-k') \,:\, k \in \mathbb{Z}\}.\]
    It is clear that $A$ is contained in the intersection of $\{t k \,:\, k \in \mathbb{Z}\}$ and $E_f(\varepsilon)$. Let $K$ denote the maximum for $|k'|$ for the at most $d$ equivalence classes. Since every interval of length at least $t(2K+1)$ has nonempty intersection with $A$, it follows that $A$ is relatively dense.
\end{proof}


We can now use Lemma~\ref{lem:aritprog} to establish the easy implication of Bohr's splitting theorem.

\begin{proof}[Proof of the implication {\normalfont(ii)}$\implies${\normalfont(i)} in Theorem~\ref{thm:bohrsplit}]
    Since $b$ is bounded and analytic in $\mathbb{C}_\kappa$ for every $\kappa>0$, it is uniformly continuous in $\mathbb{C}_\kappa$ for every $\kappa>0$ by Cauchy's estimate. Fix $\kappa>0$ and let $t>0$ be the period of $p$. It is plain that $E_f(\varepsilon)$ contains the intersection of $\{tk\,:\,k\in\mathbb Z\}$ and $E_b(\varepsilon)$, which is relatively dense for every $\varepsilon>0$ due to Lemma~\ref{lem:aritprog}.
\end{proof} 

We need the following standard (see e.g. \S I.1 in Besicovitch \cite{Besicovitch1955}) result alluded to above. Its proof amounts to noticing that the continuity of $f$, baked into the assumption of almost periodicity, can be extended to boundedness and uniform continuity in any compact subset of $\mathbb{C}_\kappa$, which can in turn be extended to a vertical strip using almost periodicity. 

\begin{lemma} \label{lem:strip}
    If $f$ is almost periodic in $\mathbb{C}_\kappa$, then $f$ is bounded and uniformly continuous in $\mathbb{S}_{\alpha,\beta}$ for every $\kappa<\alpha<\beta$.
\end{lemma}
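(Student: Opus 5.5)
We first establish boundedness and uniform continuity on a compact rectangle, and then spread both properties vertically along the strip using translation numbers. Fix $\kappa<\alpha<\beta$ and set $\varepsilon=1$ in the definition of almost periodicity. This gives an $L>0$ such that every interval of length $L$ meets $E_f(1)$. Consider the closed rectangle
\[R=\{\sigma+it\,:\,\alpha\le\sigma\le\beta,\ 0\le t\le L\}.\]
It is a compact subset of $\mathbb{C}_\kappa$ because $\alpha>\kappa$. Since $f$ is continuous, it is bounded on $R$, say $|f|\le M$ there, and uniformly continuous on $R$ by the Heine--Cantor theorem.

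\textbf{Boundedness.} Take any $s=\sigma+it\in\mathbb{S}_{\alpha,\beta}$. The interval $[-t,-t+L]$ has length $L$, so it contains some $\tau\in E_f(1)$. Then $s+i\tau$ has imaginary part $t+\tau\in[0,L]$, so $s+i\tau\in R$. Hence
\[|f(s)|\le |f(s+i\tau)|+|V_\tau f(s)-f(s)|\le M+1.\]

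\textbf{Uniform continuity.} Fix $\varepsilon>0$ and choose $L_\varepsilon$ so that every interval of length $L_\varepsilon$ meets $E_f(\varepsilon/3)$. Let $R_\varepsilon$ be the rectangle with $\alpha\le\sigma\le\beta$ and $0\le t\le L_\varepsilon+1$. On $R_\varepsilon$, uniform continuity gives a $\delta\in(0,1)$ such that $|f(z)-f(w)|<\varepsilon/3$ whenever $z,w\in R_\varepsilon$ and $|z-w|<\delta$.

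Now take $s,s'\in\mathbb{S}_{\alpha,\beta}$ with $|s-s'|<\delta$, and write $t=\mre(-is)$ for the imaginary part of $s$. Pick $\tau\in E_f(\varepsilon/3)$ with $t+\tau\in[0,L_\varepsilon]$. Then $s+i\tau\in R_\varepsilon$. Its imaginary part lies in $[0,L_\varepsilon]$ and $|s-s'|<\delta<1$, so the imaginary part of $s'+i\tau$ lies within $\delta$ of $[0,L_\varepsilon]$.

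At this point one must make sure both shifted points lie in the compact set. A point whose imaginary part dips slightly below $0$ is the only issue. This is handled by letting the rectangle run over $-1\le t\le L_\varepsilon+1$ instead, which is still compact and still contained in $\mathbb{C}_\kappa$. With that choice, both $s+i\tau$ and $s'+i\tau$ lie in the rectangle and are within $\delta$ of each other.

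The triangle inequality then gives
\[|f(s)-f(s')|\le |f(s)-f(s+i\tau)|+|f(s+i\tau)-f(s'+i\tau)|+|f(s'+i\tau)-f(s')|<\varepsilon.\]
The two outer terms are each less than $\varepsilon/3$ because $\tau\in E_f(\varepsilon/3)$. The middle term is less than $\varepsilon/3$ by uniform continuity on the rectangle.

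The only genuine care point is choosing the compact rectangle with a margin, so that both translated points land in it. The rest is routine.
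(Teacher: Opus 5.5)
Your proof is correct and follows the same route the paper sketches: continuity gives boundedness and uniform continuity on a compact rectangle $\alpha\le\sigma\le\beta$, $-1\le t\le L_\varepsilon+1$, and translation numbers then carry both properties along the whole strip. Two cosmetic points: $\delta$ must be taken from uniform continuity on the enlarged rectangle (as your ``instead'' implicitly does), and the outer terms are only bounded by $\varepsilon/3$ rather than strictly less, which still yields $\lvert f(s)-f(s')\rvert<\varepsilon$ because the middle term is strict.
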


In view of the uniform continuity asserted in Lemma~\ref{lem:strip}, the proof of Lemma~\ref{lem:aritprog} also applies to Lemma~\ref{lem:aritprogstrip}.

\begin{proof}[Proof of Theorem~\ref{thm:percomp}]
    Fix $\kappa<\alpha<\beta$. We will establish the stronger claim that there for every $\varepsilon>0$ is a positive integer $K$ such that
\begin{equation}\label{eq:periodic-component-cauchy}
        \left\lvert\frac{1}{\lvert J_1\rvert}\sum_{j\in J_1}V_{j\tau}f-\frac{1}{\lvert J_2\rvert}\sum_{j\in J_2}V_{j\tau}f\right\rvert\leq\varepsilon
    \end{equation}
    in $\mathbb S_{\alpha,\beta}$ for all sets $J_1$ and $J_2$ of at least $K$ consecutive integers. 

    By Lemma~\ref{lem:aritprogstrip}, there is an integer $d$ such that among any $d$ consecutive integers, there is at one least integer $m$ such that $m\tau$ belongs to $E_f(\varepsilon/6,\mathbb{S}_{\alpha,\beta})$. Let $M$ be the least upper bound of the modulus of $f$ in $\mathbb{S}_{\alpha,\beta}$ and recall that $M<\infty$ due to Lemma~\ref{lem:strip}. 
    
    Let $D \geq d$ be an integer to be chosen later. Let $J_1$ and $J_2$ be sets consisting of $D$ consecutive integers and let $a_1$ and $a_2$ denote the smallest integer in each set. There are integers $a_1 \leq m_1 < a_1+d$ and $a_2 \leq m_2 < a_2 + d$ such that both $m_1 \tau$ and $m_2 \tau$ belong to $E_f(\varepsilon/6,\mathbb{S}_{\alpha,\beta})$. We next estimate
    \begin{multline*}
        \left\lvert\sum_{j\in J_1}V_{j\tau}f-\sum_{j\in J_2}V_{j\tau}f\right\rvert \\ \leq\left\lvert\sum_{j\in J_1}(V_{j\tau}f-V_{(j+m_2-m_1)\tau}f)\right\rvert+\left\lvert\sum_{j\in J_1+m_2-m_1}V_{j\tau}f-\sum_{j\in J_2}V_{j\tau}f\right\rvert.
    \end{multline*}
    The first term can be bounded by $(\varepsilon/3) D$, since $(m_2-m_1)\tau$ belongs to $E_f(\varepsilon/3,\mathbb{S}_{\alpha,\beta})$. The second term can be bounded by $2d M$, since $D \geq d$. We now choose an integer $D$ satisfying $D \geq d\max(1,6M/\varepsilon)$ to obtain
    \begin{equation} \label{eq:compest1}
        \left\lvert\frac{1}{\lvert J_1\rvert}\sum_{j\in J_1}V_{j\tau}f-\frac{1}{\lvert J_2\rvert}\sum_{j\in J_2}V_{j\tau}f\right\rvert \leq \frac{\varepsilon}{3}+\frac{2dM}{D} \leq \frac{2}{3}\varepsilon,
    \end{equation}
    since $|J_1|=|J_2|=D$. 
    
    Suppose next that there are positive integers $n_1$ and $n_2$ such that $J_1$ and $J_2$ consist of $n_1 D$ and $n_2 D$ consecutive integers, respectively. Notice that the estimate \eqref{eq:compest1} also holds for these sets, since it can be obtained by applying the case $n_1=n_2=1$ of the estimate $n_1 n_2$ times. 
    
    This yields the desired estimate \eqref{eq:periodic-component-cauchy}, but only in the case $J_1$ and $J_2$ are sets of $n_1D$ and $n_2 D$ consecutive integers. To obtain the general case, suppose that $J$ is a set of consecutive integers such that $n D \leq \lvert J\rvert < (n+1)D$, and let $J_0$ denote the $nD$ first integers in $J$. Straightforward estimates yields that
    \begin{equation} \label{eq:compest2}
        \left\lvert \frac{1}{\lvert J \rvert} \sum_{j \in J} V_{j\tau} f- \frac{1}{\lvert J_0\rvert} \sum_{j \in J_0} V_{j\tau} f \right\rvert \leq 2M\left(1-\frac{\lvert J_0 \rvert}{\lvert J \rvert}\right) \leq \frac{2M}{n} \leq \frac{\varepsilon}{6},
    \end{equation}
    if $n\geq N$ and for an integer $N \geq M/(12\varepsilon)$. If we set $K = ND$, then we obtain the desired estimate \eqref{eq:periodic-component-cauchy} after using \eqref{eq:compest2} twice and \eqref{eq:compest1} once.

    All that remains is to prove the final assertion. Since $f$ is bounded in $\mathbb{S}_{\alpha,\beta}$ for $\kappa<\alpha<\beta<\infty$ by Lemma~\ref{lem:strip}, it follows from the definition of $P_\tau f$ in \eqref{eq:percomp} and its convergence that $V_\tau P_\tau f(s) = P_\tau f(s)$ holds for all $s$ in $\mathbb{C}_\kappa$.
\end{proof}

One final ingredient is needed for the proof of Theorem~\ref{thm:bohrsplit}, namely the maximum principle. We let $M_\infty(\sigma,f)$ denote the supremum of $\lvert f(\sigma+it)\rvert$ over real numbers $t$. Note that Lemma~\ref{lem:strip} ensures that if $f$ is almost periodic in $\mathbb{C}_\kappa$, then $M_\infty(\sigma,f) < \infty$ for every $\kappa<\sigma<\infty$.

In the next result, we let $T_f(\mathbb{C}_\kappa)$ stand for the translation module of $f$ with respect to the half-plane $\mathbb{C}_\kappa$.

\begin{lemma} \label{lem:mp}
    Suppose that $f$ is analytic in $\mathbb{C}_0$. If $\tau$ belongs to the translation module $T_f(\mathbb{C}_\kappa)$ for some $\kappa>0$, then 
    \[\lvert V_\tau f(s) - f(s)\rvert \leq 2M_\infty(\kappa,f)\]
    for every $s$ in $\mathbb{C}_\kappa$.
\end{lemma}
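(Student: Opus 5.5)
Set $g = V_\tau f - f$, which is analytic in $\mathbb{C}_0$. Since $\tau$ lies in $T_f(\mathbb{C}_\kappa)$, there is some $\varepsilon>0$ with $\lvert g\rvert \le \varepsilon$ in $\mathbb{C}_\kappa$. In particular $g$ is bounded in $\mathbb{C}_\kappa$, and this is the only use of the translation module membership. On the boundary line $\mre s = \kappa$ we have the trivial estimate
\[\lvert g(\kappa+it)\rvert \le \lvert f(\kappa+i(t+\tau))\rvert + \lvert f(\kappa+it)\rvert \le 2M_\infty(\kappa,f).\]
This bound is finite: $f$ is analytic, hence continuous, on $\overline{\mathbb{C}_\kappa}\subset\mathbb{C}_0$, and $g$ is bounded there by $\varepsilon$, being a limit of its values in $\mathbb{C}_\kappa$. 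If $M_\infty(\kappa,f)=\infty$, the claim is vacuous.

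The plan is to apply the Phragmén--Lindelöf maximum principle for a half-plane. A bounded analytic function in $\mathbb{C}_\kappa$ that extends continuously to the closed half-plane is bounded in modulus by the supremum of its boundary values. This gives $\lvert g\rvert \le 2M_\infty(\kappa,f)$ in $\mathbb{C}_\kappa$, as claimed.

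The only real obstacle is that the half-plane is unbounded, so the plain maximum principle does not apply directly. To keep things elementary, I would give the standard auxiliary-function argument. Let $B = 2M_\infty(\kappa,f)$ and, for $\eta>0$, set
\[g_\eta(s) = \frac{g(s)}{1+\eta(s-\kappa)}.\]
This function is analytic in $\mathbb{C}_\kappa$ and continuous up to the boundary. On the boundary $\lvert 1+\eta(s-\kappa)\rvert \ge 1$, so $\lvert g_\eta\rvert \le B$ there. For large $\lvert s\rvert$ in the half-plane, $\lvert g_\eta\rvert \le \varepsilon/(\eta\lvert s-\kappa\rvert) \to 0$, since $\mre(1+\eta(s-\kappa))\ge 1$ and the modulus grows like $\eta\lvert s-\kappa\rvert$. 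Applying the ordinary maximum principle on large half-discs centred at $\kappa$ then gives $\lvert g_\eta\rvert \le \max(B, \varepsilon/(\eta R))$ for every large $R$, so $\lvert g_\eta\rvert\le B$. Letting $\eta\to 0^+$ pointwise yields $\lvert g(s)\rvert\le B$ for every $s$ in $\mathbb{C}_\kappa$.
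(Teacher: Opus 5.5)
Your proposal is correct and is essentially the paper's argument: $V_\tau f - f$ is a bounded analytic function in $\mathbb{C}_\kappa$, and this boundedness is the only use of $\tau\in T_f(\mathbb{C}_\kappa)$. The maximum principle then bounds it by its supremum on the line $\mre s=\kappa$, and the triangle inequality bounds that supremum by $2M_\infty(\kappa,f)$. The only difference is that you prove the half-plane maximum principle for bounded functions using the auxiliary function $g/(1+\eta(s-\kappa))$, where the paper simply cites it.

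One minor point: your remark that ``this bound is finite'' does not follow from the continuity of $f$. It is harmless, though, because you also note that the claim is vacuous when $M_\infty(\kappa,f)=\infty$.
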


\begin{proof}
    The assumption that $\tau$ belongs to the translation module $T_f(\mathbb{C}_\kappa)$ means that $V_\tau f - f$ is a bounded analytic function in $\mathbb{C}_\kappa$. By the maximum principle, we get that
    \[\lvert V_\tau f(s) - f(s)\rvert \leq M_\infty(\kappa,V_\tau f-f)\]
    for every $s$ in $\mathbb{C}_\kappa$. The stated estimate follows from the triangle inequality.
\end{proof}

We are now in a position to finish the proof of Bohr's splitting theorem.

\begin{proof}[Proof of the implication {\normalfont(i)}$\implies${\normalfont(ii)} in Theorem~\ref{thm:bohrsplit}]
    Let us start with a basic observation. Since $f$ is bounded in $\mathbb{S}_{\alpha,\beta}$ for every $0<\alpha<\beta$ due to Lemma~\ref{lem:strip} and since every $\tau$ is a translation number of a bounded function, it follows that the translation module $T_f$ does not depend on which half-plane $\mathbb{C}_\kappa$ with $\kappa>0$ we consider $f$ as a function on.

    Let therefore $\tau>0$ be a number in the translation module $T_f$ and let $p = P_\tau f$. The final assertion in Theorem~\ref{thm:percomp} implies that $p$ is periodic in $\mathbb{C}_0$, while the uniform convergence of \eqref{eq:percomp} ensures that $p$ is analytic in $\mathbb{C}_0$. This also shows that $b = f-p$ is analytic in $\mathbb{C}_0$. The task at hand is therefore to show that $b$ is almost periodic and bounded in $\mathbb{C}_\kappa$ for every $\kappa>0$. Let therefore $\kappa>0$ be fixed.

    Fix $\varepsilon>0$. It follows from the definition of $P_\tau f$ in \eqref{eq:percomp} that $E_f(\varepsilon/2,\mathbb{C}_\kappa)$ is a subset of $E_p(\varepsilon/2,\mathbb{C}_\kappa)$, which when combined with the triangle inequality yields that $E_b(\varepsilon,\mathbb{C}_\kappa)$ contains $E_f(\varepsilon/2,\mathbb{C}_\kappa)$. Hence $b$ is almost periodic since $f$ is almost periodic.

    We have not yet used that $\tau>0$ is a translation number of $f$. If we bring this assumption into play and use Lemma~\ref{lem:mp}, then we find that
    \[\lvert b(s) \rvert \leq \liminf_{k\to\infty} \frac{1}{k} \sum_{j=0}^{k-1} \left\lvert f(s)-V_{j\tau} f(s)\right\rvert \leq 2M_\infty(\kappa,f)\]
    for every $s$ in $\mathbb{C}_\kappa$.  The the right-hand side is finite due to Lemma~\ref{lem:strip}.
\end{proof}

We now turn to the proof of the uniqueness theorem, which involves an analysis of the Laurent series \eqref{eq:laurent} of the periodic component.

\begin{proof}[Proof of Theorem~\ref{thm:bohrunique}]
   We begin with (a) and fix $\kappa>0$. By the triangle inequality, the addition of a bounded function does affect the translation module. If we write $f=p+b$ and $p=p_u+p_b$ for the unbounded and bounded parts of $p$, then
   \[T_f(\mathbb{C}_\kappa) = T_{p_u}(\mathbb{C}_\kappa).\]
   Let $t>0$ be the period of $p_u$. Inspecting the Laurent series \eqref{eq:laurent}, it is clear that $E_{p_u}(\varepsilon,\mathbb{C}_\kappa) = \{tk\,:\,k \in \mathbb{Z}\}$ for any $\varepsilon>0$. The proof of (a) is complete.

   Now we turn to (b) and we again fix $\kappa>0$. If $f=p_1+b_1$ and $f=p_2+b_2$, then the difference of the unbounded parts of $p_1$ and $p_2$ is bounded in $\mathbb{C}_\kappa$. Since they have the same period by (a), it follows that the difference of their unbounded parts must vanish identically in $\mathbb{C}_\kappa$.
\end{proof}

\section{Bohr's proof of the splitting theorem} \label{sec:bohr}
Bohr's proof is based on the deep result (see e.g. \S III.3 in Besicovitch \cite{Besicovitch1955}) that if $f$ is analytic and almost periodic in $\mathbb{C}_\kappa$ for every $\kappa>0$, then there is a countable set of real numbers $\Lambda$ such that $f$ has an associated Dirichlet series that uniquely determines it, 
\begin{equation} \label{eq:diriseri}
    f(s) \sim \sum_{\lambda \in \Lambda} a_\lambda \exp(-\lambda s).
\end{equation}
The Dirichlet series \eqref{eq:diriseri} should be compared to the Laurent series \eqref{eq:laurent}. Note that while the latter is absolutely convergent in $\mathbb{C}_0$, the former does not in general converge at any point. However, it is possible to use \eqref{eq:diriseri} to construct Dirichlet polynomials (i.e. finite series) that converge uniformly to $f$ in $\mathbb{S}_{\alpha,\beta}$ for every $0<\alpha<\beta$.

The approximation by Dirichlet polynomials can be used to give a short proof of Theorem~\ref{thm:percomp}, since it trivially holds for Dirichlet polynomials. Another consequence is that $f$ is bounded in $\mathbb{C}_\kappa$ for every $\kappa>0$ if and only if $\Lambda$ does not contain any negative numbers. 

Bohr uses the Dirichlet series \eqref{eq:diriseri} in the proof of both implications in Theorem~\ref{thm:bohrsplit}, but for the ``not quite trivial'' implication (ii)$\implies$(i) it is easily removed. Bohr's proof of this implication is similar to ours, although strictly speaking he uses the arithmetical property via  Lemma~\ref{lem:aritprogstrip} and not Lemma~\ref{lem:aritprog}.

Bohr's proof of the implication (i)$\implies$(ii) in Theorem~\ref{thm:bohrsplit} uses the Dirichlet series \eqref{eq:diriseri} in a much stronger sense. He begins by investigating the unbounded part of the Dirichlet series (the part corresponding to $\lambda<0$) and obtaining the formula for the translation module $T_f$ from   Theorem~\ref{thm:bohrunique}~(a). He then chooses the smallest positive number $t$ from $T_f$ and invokes Theorem~\ref{thm:percomp} to extract $p=P_t f$. He finally deduces that $b = f - p$ is bounded, since its Dirichlet series contains no terms with $\lambda<0$.

It is here our proof is significantly different. We give a self-contained proof of Theorem~\ref{thm:percomp} and use it to extract the unbounded part of $f$ without any information from  the Dirichlet series \eqref{eq:diriseri}. The key idea is Lemma~\ref{lem:mp}, which allows us to take $p = P_\tau f$ for any $\tau>0$ in the translation module $T_f$.

The period of $P_\tau f$ is $\tau/n$ for a positive integer $n$. Theorem~\ref{thm:bohrunique} asserts that the period of the unbounded part of $P_\tau f$ is always $t$. Hence choosing $\tau = kt$ for some $k>1$ does not affect the unbounded part of $P_\tau f$, but may lead to more terms in the bounded part of $P_\tau f$. Thus the choice $\tau = t$ made by Bohr is canonical in the sense that it makes the bounded part of $P_\tau f$ contain the fewest possible terms.

\bibliography{bst}

\end{document}